\documentclass{article}
\usepackage{college-math-j}

\theoremstyle{theorem}
\newtheorem{theorem}{Theorem}
\newtheorem{proposition}{Proposition}
\newtheorem{corollary}{Corollary}
\theoremstyle{definition}
\newtheorem*{remark}{Remark}

\begin{document}

\title{From Pythagorean Runs to Cubic Runs}
\author{Anatoly Eydelzon\\
\small Department of Mathematical Sciences, The University of Texas at Dallas\\
\small Richardson, Texas, USA\\
\small \texttt{anatoly@utdallas.edu}}
\date{}
\maketitle

\begin{abstract}
Pythagorean runs give a simple way to balance consecutive squares, with a formula that works for every length. Replacing squares by cubes changes the picture completely. A computer search turns up identities in which consecutive cubes are balanced by later cubes whose bases differ by two. After centering both progressions and fixing the ratio of their centers, the equation becomes a generalized Pell equation. One concrete seed then produces an infinite family of ordered cubic runs. The story therefore links a very elementary question about sums of cubes to Pell equations and linear recurrences.
\end{abstract}

\section{Introduction}

Several years ago, at a
retirement party of my esteemed colleague Dr. Paul Stanford, somebody mentioned the identity
\[
  3^2+4^2=5^2
\]
and the next identities of the same kind, and asked whether this pattern
could be continued in general.  I found the formula quickly enough, only to discover that Boardman had already published it as “Pythagorean Runs.” I set the problem aside for several years.

A few years later I returned to the same idea, but with cubes instead of squares. The change looks minor; the mathematics is not. The quadratic problem has a simple formula for every length.
For cubes, the search leads instead to Diophantine equations, Pell
equations, and some surprisingly large solutions.  This is the problem
considered in this paper.

Boardman's ``Pythagorean Runs'' gives the identities
\[
 3^2+4^2=5^2,
\]
\[
 10^2+11^2+12^2=13^2+14^2,
\]
\[
 21^2+22^2+23^2+24^2=25^2+26^2+27^2,
\]
and their general continuation \cite{Boardman}.  In modern notation,
for every \(K\ge1\), if
\[
 A=K(2K+1),
\]
then
\begin{equation}\label{eq:square-run}
 \sum_{j=0}^{K}(A+j)^2
 =
 \sum_{j=K+1}^{2K}(A+j)^2.
\end{equation}
Indeed, direct simplification gives
\[
 \sum_{j=0}^{K}(A+j)^2-
 \sum_{j=K+1}^{2K}(A+j)^2
 =
 (A+K)\bigl(A-K(2K+1)\bigr).
\]
Thus \(A=K(2K+1)\) is the unique positive solution for each \(K\).

It is natural to ask the same question for higher powers. I first tried the most direct cubic version, with consecutive cubes on both sides (so \(d=1\)). A computer search did not find an example. I then allowed the terms on the right to form an arithmetic progression. The first nontrivial case I found was \(d=2\), and this case leads to a Pell-type equation. This does not prove that no solutions with \(d=1\) exist; it only means that none were found in the search.

A first indication that the cubic problem may look different from Boardman's square runs is the identity

\begin{equation}\label{eq:smallintro}
 25^3+26^3+\cdots+32^3
 =
 33^3+35^3+37^3+39^3
 =188784.
\end{equation}
Here eight consecutive cubes are balanced by four later cubes whose
bases have common difference \(2\).  Identity \eqref{eq:smallintro}
can simply be checked by evaluating the two sums.  It was found by a
finite search rather than guessed.  Choose small positive integers
\(m\) and \(A\), and compute exactly
\[
 L(m,A)=\sum_{j=0}^{2m-1}(A+j)^3.
\]
For this fixed \(m\), increase \(B\) and compute
\[
 R(m,B)=\sum_{j=0}^{m-1}(B+2j)^3.
\]
Since \(R(m,B)\) is strictly increasing in \(B\), once
\(R(m,B)>L(m,A)\) there is no reason to try larger values of \(B\)
for that pair \(m,A\).  Testing small integers in this way produces
\[
 (m,A,B)=(4,25,33),
\]
and direct substitution gives \eqref{eq:smallintro}.  The small example was found by search, not guessed. It suggested the problem we now state. (The infinite family proved later does not rely on that search.)

This suggests the cubic problem
\begin{equation}\label{eq:main}
 \sum_{j=0}^{2m-1}(A+j)^3
 =
 \sum_{j=0}^{m-1}(B+2j)^3,
\end{equation}
together with the ordering condition
\begin{equation}\label{eq:ordering}
 B>A+2m-1.
\end{equation}
Thus the left side consists of \(2m\) consecutive cubes, while the
right side consists of \(m\) later cubes with common difference \(2\).
The main result of this note is that such ordered identities occur
infinitely often.

Problems involving sums of consecutive cubes and equal power sums of
arithmetic progressions have been studied in several settings; see, for
example, \cite{BazsoEtAl2012,BazsoMezo2015,BennettPatelSiksek,Pletser}.
Pell equations also occur in related problems involving consecutive cubes
\cite{Pletser}.  Our goal is narrower and more elementary: to show how the
particular step-\(1\) versus step-\(2\) problem \eqref{eq:main} leads from
a searched example to an explicit infinite family that also preserves the
strict ordering \eqref{eq:ordering}.

\section{A centered form of the cubic equation}

The special lengths \(2m\) and \(m\) make the equation particularly
simple after centering the two progressions.  Put
\begin{equation}\label{eq:XY}
 X=2A+2m-1,\qquad Y=B+m-1.
\end{equation}
Indeed, the center of the left-hand block is
\[
 \frac{A+(A+2m-1)}{2}=\frac{X}{2},
\]
while the center of the right-hand progression is
\[
 \frac{B+(B+2m-2)}{2}=Y.
\]
Thus \(X/2\) and \(Y\) are respectively the centers of the two
progressions.  These formulas just come from adding the first and last terms of each
progression and dividing by \(2\).  Conversely, once \(m,X,Y\) are known, the original starting
points are recovered by solving \eqref{eq:XY}:
\begin{equation}\label{eq:recoverAB}
 A=\frac{X-2m+1}{2},\qquad B=Y-m+1.
\end{equation}
This change of variables is useful because the formulas are simpler when
we use the centers of the two progressions.

\begin{proposition}\label{prop:centered}
For positive integers \(m,A,B\), equation \eqref{eq:main} is equivalent
to
\begin{equation}\label{eq:centered}
 X\bigl(X^2+4m^2-1\bigr)
 =
 4Y\bigl(Y^2+m^2-1\bigr),
\end{equation}
where \(X,Y\) are defined by \eqref{eq:XY}.
\end{proposition}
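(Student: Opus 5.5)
Both sides of \eqref{eq:main} are arithmetic progressions of cubes, so I will rewrite each term relative to its center and use the symmetry of the offsets to kill the odd powers. On the left, the terms are
\[
 A+j=\frac{X}{2}+\frac{2j-(2m-1)}{2},\qquad j=0,\dots,2m-1,
\]
so the offsets $t_j=\tfrac{2j-2m+1}{2}$ run symmetrically over $\pm\tfrac12,\pm\tfrac32,\dots,\pm\tfrac{2m-1}{2}$. Expanding $(X/2+t_j)^3$ and summing, the terms linear and cubic in $t_j$ cancel. What remains is
\[
 L=2m\Bigl(\frac{X}{2}\Bigr)^3+3\cdot\frac{X}{2}\sum_j t_j^2 .
\]
Since $\sum_j t_j^2=\tfrac14\sum_j(2j-2m+1)^2$ is a sum of odd squares, I will evaluate it with the standard identity $\sum_{k=1}^{m}(2k-1)^2=\tfrac{m(4m^2-1)}{3}$. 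This gives $\sum_j t_j^2=\tfrac{m(4m^2-1)}{6}$ and hence
\[
 L=\frac{m}{4}\,X\bigl(X^2+4m^2-1\bigr).
\]

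On the right, the terms are $B+2j=Y+s_j$ with $s_j=2j-(m-1)$, $j=0,\dots,m-1$. The offsets are again symmetric about $0$ (they have the parity of $m-1$), so the odd-power sums vanish and
\[
 R=mY^3+3Y\sum_j s_j^2 .
\]
Here $\sum_j s_j^2=\sum_{j=0}^{m-1}(2j-m+1)^2=\tfrac{m(m^2-1)}{3}$, which can be checked by expanding, or by noting that $s_j$ is twice the centered offset of $m$ consecutive integers, whose squares sum to $\tfrac{m(m^2-1)}{12}$. Therefore
\[
 R=mY\bigl(Y^2+m^2-1\bigr).
\]

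Equating, \eqref{eq:main} reads $\tfrac m4X(X^2+4m^2-1)=mY(Y^2+m^2-1)$. Multiplying by $4/m$, which is legitimate since $m\ge1$, gives \eqref{eq:centered}. Every step is reversible: the map $(A,B)\mapsto(X,Y)$ is inverted by \eqref{eq:recoverAB}, so the two equations are equivalent.

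The only point needing care is computing the two second-moment sums correctly, especially the half-integer offsets on the left. I would sanity-check the result on the seed $(m,A,B)=(4,25,33)$, which gives $X=57$ and $Y=36$. Then $57\cdot(3249+63)=188784=4\cdot 36\cdot(1296+15)$, matching \eqref{eq:smallintro}.
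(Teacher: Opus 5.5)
Your proof is correct, and it organizes the computation differently from the paper. The paper expands both sides in the original variables using the formulas for $\sum j$, $\sum j^2$, $\sum j^3$. It then substitutes $A=(X-2m+1)/2$, $B=Y-m+1$ and collects terms, a step it leaves to the reader, and it displays only the difference $\frac m4\bigl(X^3+4Xm^2-X-4Y^3-4Ym^2+4Y\bigr)$.

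You instead expand each cube about the center of its own progression. The symmetry of the offsets kills the first and third moments, so the only input you need is a second-moment sum. Your values $\sum_j t_j^2=\frac{m(4m^2-1)}{6}$ and $\sum_j s_j^2=\frac{m(m^2-1)}{3}$ are right. The first tacitly uses the odd-square identity twice, once for the negative offsets and once for the positive ones, which together with the factor $\frac14$ turns $\frac13$ into $\frac16$.

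Your route has several advantages. It is checkable by hand and never needs the sum-of-cubes formula. It gives separate closed forms $L=\frac m4X(X^2+4m^2-1)$ and $R=mY(Y^2+m^2-1)$, so you also obtain the common value of the sums, as your check $188784$ shows. It also explains why the centered variables make the equation so clean, which is exactly the paper's stated motivation for introducing $X$ and $Y$. The paper's route is more mechanical and suits a computer-algebra check, but it hides that structure.

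One small remark: you do not need the invertibility of $(A,B)\mapsto(X,Y)$. Since $X,Y$ are defined from $(m,A,B)$, the identity $\frac4m(L-R)=X(X^2+4m^2-1)-4Y(Y^2+m^2-1)$ holds identically, and equivalence follows at once from $m\neq0$.
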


\begin{proof}
Using
\[
 \sum_{j=0}^{n-1}j=\frac{n(n-1)}2,\qquad
 \sum_{j=0}^{n-1}j^2=\frac{n(n-1)(2n-1)}6,\qquad
 \sum_{j=0}^{n-1}j^3=\left(\frac{n(n-1)}2\right)^2,
\]
expand both sides of \eqref{eq:main}.  Substituting
\[
 A=\frac{X-2m+1}{2},\qquad B=Y-m+1
\]
and collecting terms gives
\[
 \sum_{j=0}^{2m-1}(A+j)^3
 -
 \sum_{j=0}^{m-1}(B+2j)^3
 =
 \frac{m}{4}
 \left(
 X^3+4Xm^2-X-4Y^3-4Ym^2+4Y
 \right).
\]
Since \(m>0\), equality of the two cubic sums is therefore equivalent
to
\[
 X^3+4Xm^2-X-4Y^3-4Ym^2+4Y=0,
\]
which is exactly \eqref{eq:centered}.
\end{proof}

The identity \eqref{eq:smallintro} corresponds to
\begin{equation}\label{eq:small}
 (m,A,B)=(4,25,33).
\end{equation}
For this solution,
\[
 (X,Y)=(57,36).
\]
The infinite ordered family below will be generated from a different
seed.

\section{Reduction to a generalized Pell equation}

A second solution is
\begin{align}\label{eq:seedidentity}
705^3+706^3+\cdots+880^3
&=913^3+915^3+\cdots+1087^3 \notag\\
&=88,681,384,000.
\end{align}
This identity was found by the same finite computer search described above.

For this solution,
\[
 (X,Y)=(1585,1000),
\]
so that
\[
 \frac{X}{Y}=\frac{317}{200}.
\]
The ratio \(317:200\) was not chosen in advance.  It appeared when the
searched solution was written in centered coordinates.  This led to a natural
question: if we keep this center ratio fixed, can this one solution produce
infinitely many more?
Writing a common scale factor as \(z\) gives
\begin{equation}\label{eq:ratio}
 X=317z,\qquad Y=200z.
\end{equation}
For the displayed solution, \(z=5\).  With the ratio fixed, the three-variable cubic equation becomes a quadratic
equation in \(z\) and \(m\).  This is where the Pell equation enters.

\begin{proposition}\label{prop:pellreduction}
Under \eqref{eq:ratio}, the centered equation \eqref{eq:centered} is
equivalent to
\begin{equation}\label{eq:generalizedpell}
 48329z^2-156m^2=161.
\end{equation}
Moreover, \((z,m)=(5,88)\) is a positive integer solution.
\end{proposition}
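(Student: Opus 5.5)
The plan is to substitute the ratio \eqref{eq:ratio} directly into the centered equation of Proposition~\ref{prop:centered}, cancel the common factor of \(z\), and identify the resulting quadratic relation as a constant multiple of \eqref{eq:generalizedpell}. I will work with the polynomial form
\[
X^3+4Xm^2-X=4Y^3+4Ym^2-4Y
\]
obtained in the proof of Proposition~\ref{prop:centered}. Since the ratio hypothesis fixes \(X\) and \(Y\) as multiples of one parameter \(z\), every term becomes a monomial in \(z\) and \(m\). This is what turns the cubic into a quadratic.

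The first step is the substitution \(X=317z\), \(Y=200z\). The left side becomes
\[
317^3z^3+4\cdot 317\,zm^2-317z=31855013\,z^3+1268\,zm^2-317z,
\]
using \(317^2=100489\) and \(100489\cdot317=31855013\). The right side becomes
\[
4\cdot 200^3z^3+800\,zm^2-800z=32000000\,z^3+800\,zm^2-800z.
\]
Moving everything to one side, the equation is equivalent to
\[
z\bigl(144987\,z^2-468\,m^2-483\bigr)=0.
\]

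Next I use \(z>0\), which holds because \(Y=B+m-1>0\) for positive \(m\) and \(B\). Dividing by \(z\) leaves \(144987z^2-468m^2=483\). All three coefficients are divisible by \(3\), and dividing gives exactly \(48329z^2-156m^2=161\). Every step is reversible under \(z\neq 0\), so this establishes the claimed equivalence in both directions.

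For the seed I check directly that \(48329\cdot 25=1208225\) and \(156\cdot 88^2=156\cdot 7744=1208064\). Their difference is \(161\), so \((z,m)=(5,88)\) satisfies \eqref{eq:generalizedpell}. This agrees with \eqref{eq:seedidentity}: there \(2m=176\) consecutive cubes run from \(705\) to \(880\), and \(z=5\) gives \((X,Y)=(1585,1000)\).

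There is no conceptual obstacle. The only place an error could creep in is the arithmetic with \(317^3\), together with noticing the common factor \(3\) that reduces the coefficients to the stated ones. I would double-check this arithmetic by confirming that the seed satisfies the undivided equation \(144987z^2-468m^2=483\) as well.
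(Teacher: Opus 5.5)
Your proposal is correct and follows the paper's own proof: substitute \(X=317z\), \(Y=200z\) into the centered equation, cancel the factor \(z>0\) to obtain \(144987z^2-468m^2=483\), divide by \(3\), and check \((z,m)=(5,88)\) directly. All of your arithmetic (\(317^3=31855013\), \(4\cdot200^3=32000000\), and \(1208225-1208064=161\)) is right, and your justification of \(z>0\) from \(Y=B+m-1>0\) supplies a detail that the paper only asserts.
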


\begin{proof}
Substitute \(X=317z\) and \(Y=200z\) into
\eqref{eq:centered}.  The two sides become
\[
 317z\bigl(317^2z^2+4m^2-1\bigr)
 \quad\hbox{and}\quad
 800z\bigl(200^2z^2+m^2-1\bigr).
\]
Since \(z>0\), divide by \(z\), move all terms to one side, and
collect the coefficients of \(z^2\), \(m^2\), and the constant term.
This gives
\[
 -144987z^2+468m^2+483=0.
\]
Dividing by \(3\) and rearranging gives
\[
 48329z^2-156m^2=161,
\]
which is \eqref{eq:generalizedpell}.  Finally,
\[
 48329\cdot5^2-156\cdot88^2=161.
\]
\end{proof}

Equation \eqref{eq:generalizedpell} is a Pell-type quadratic equation rather
than a generalized Pell equation in normalized form.  Multiplying it by
$48329$ and setting
\[
 x=48329z
\]
gives
\begin{equation}\label{eq:normalizedpell}
 x^2-7539324m^2=7780969,
\end{equation}
which has the standard generalized Pell form $x^2-Dm^2=N$, with
\[
D=7539324=48329\cdot156.
\]
To generate new solutions of this generalized equation, we use the
associated ordinary Pell equation
\begin{equation}\label{eq:pell}
 u^2-Dv^2=1,
\end{equation}
where \(u\) and \(v\) are positive integers.  These are new variables,
distinct from \(x\) and \(m\).  The role of a solution \((u,v)\) is to
provide a transformation that carries one solution of the generalized
Pell equation to another.

The numbers \(u\) and \(v\) are obtained from the continued fraction
of \(\sqrt D\).  Since \(D\) is a positive integer that is not a perfect
square, the simple continued fraction of \(\sqrt D\) is periodic:
\[
\sqrt D=[a_0;\overline{a_1,a_2,\ldots,a_r}].
\]
Its convergents \(p_k/q_k\) are computed from
\[
p_{-2}=0,\quad p_{-1}=1,\qquad
q_{-2}=1,\quad q_{-1}=0,
\]
and
\[
p_k=a_kp_{k-1}+p_{k-2},\qquad
q_k=a_kq_{k-1}+q_{k-2}.
\]
The first convergent for which
\[
p_k^2-Dq_k^2=1
\]
gives the least positive solution of the ordinary Pell equation:
\[
u=p_k,\qquad v=q_k.
\]
For \(D=7539324\), this procedure gives
\begin{equation}\label{eq:fundamental}
 u=2047435741791027199,\qquad
 v=745665546122880.
\end{equation}
These integers are very large, but they come directly from the continued
fraction of \(\sqrt D\); they are not obtained by a brute-force search.

There is a simple reason why \(\sqrt D\) is useful here.  If
\[
 x^2-Dm^2=N,
\]
then
\[
 (x+m\sqrt D)(x-m\sqrt D)=N.
\]
Similarly, equation \eqref{eq:pell} gives
\[
 (u+v\sqrt D)(u-v\sqrt D)=1.
\]
Consequently, multiplying a solution \(x+m\sqrt D\) of the generalized
Pell equation by \(u+v\sqrt D\) preserves the value \(N\):
\[
 (x+m\sqrt D)(u+v\sqrt D)=x'+m'\sqrt D,
\]
where
\begin{equation}\label{eq:pellmultiply}
 x'=ux+Dvm,\qquad m'=vx+um.
\end{equation}
Thus a solution \((u,v)\) of the ordinary Pell equation provides the
mechanism for generating new solutions of the generalized equation.

I will use two simple terms.  A \emph{seed} is the particular integer solution
where we start.  The \emph{Pell orbit of the seed} is the sequence obtained
by repeatedly multiplying by the same solution \(u+v\sqrt D\).  In symbols,
\[
 x_n+m_n\sqrt D
 =(x_0+m_0\sqrt D)(u+v\sqrt D)^n,
 \qquad n=0,1,2,\ldots.
\]
Here ``orbit'' only means the successive points obtained by repeating this
same transformation.

In our equation \eqref{eq:normalizedpell}, the seed
\((z_0,m_0)=(5,88)\) comes from the searched cubic run: indeed,
\(X=1585=317\cdot5\), so \(z_0=5\).  Since the normalized variable
was defined by \(x=48329z\), the corresponding first coordinate is
\[
 x_0=48329z_0=48329\cdot5=241645.
\]
Thus the seed for the normalized equation is
\[
 (x_0,m_0)=(241645,88).
\]
Returning from \(x\) to \(z\) in \eqref{eq:pellmultiply} gives the
recurrence in the next section.

\section{An infinite family of ordered cubic runs}

Set
\[
 a=48329,\qquad b=156,\qquad D=ab.
\]
Let \(u,v\) be as in \eqref{eq:fundamental}.  Starting from the
seed
\[
 (z_0,m_0)=(5,88),
\]
define
\begin{equation}\label{eq:recurrence}
 \begin{aligned}
 z_{n+1}&=u z_n+bv\,m_n,\\
 m_{n+1}&=av\,z_n+u m_n.
 \end{aligned}
\end{equation}
This recurrence is simply the Pell multiplication rule in the variables
\(z,m\).  Indeed, since \(x=az\) and \(D=ab\), formula
\eqref{eq:pellmultiply} gives
\[
 az'=u(az)+abv m,
 \qquad
 m'=v(az)+um,
\]
and division of the first equation by \(a\) yields
\[
 z'=uz+bvm,\qquad m'=avz+um.
\]
Thus \eqref{eq:recurrence} is simply the Pell orbit of the seed, written
back in the original variables.

The construction is now straightforward.  The seed gives the center ratio
\(317:200\).  Keeping this ratio changes the centered cubic equation into
\eqref{eq:generalizedpell}, and a Pell unit gives infinitely many pairs
\((z_n,m_n)\).  For each pair we put \(X_n=317z_n\), \(Y_n=200z_n\),
and recover \(A_n,B_n\) from \eqref{eq:recoverAB}.  We still have to
check that these are positive integers, that the right-hand block starts
after the left-hand block ends, and that the solutions are all different.

\begin{theorem}\label{thm:main}
For every \(n\ge0\), define
\begin{equation}\label{eq:AB}
 A_n=\frac{317z_n-2m_n+1}{2},
 \qquad
 B_n=200z_n-m_n+1,
\end{equation}
where \((z_n,m_n)\) is generated by \eqref{eq:recurrence}.  Then
\(m_n,A_n,B_n\) are positive integers,
\[
 B_n>A_n+2m_n-1,
\]
and
\begin{equation}\label{eq:family}
 \sum_{j=0}^{2m_n-1}(A_n+j)^3
 =
 \sum_{j=0}^{m_n-1}(B_n+2j)^3.
\end{equation}
Consequently there are infinitely many ordered cubic runs of the form
\eqref{eq:main}.
\end{theorem}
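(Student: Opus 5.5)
The proof has four parts: the identity, integrality, the ordering, and distinctness. The identity comes first. Since $u^2-Dv^2=1$ with $D=ab$, expanding $az_{n+1}^2-bm_{n+1}^2$ with \eqref{eq:recurrence} gives
\[
az_{n+1}^2-bm_{n+1}^2=(u^2-abv^2)(az_n^2-bm_n^2).
\]
The cross terms $2abuv\,z_nm_n$ cancel, so the form $az^2-bm^2$ is invariant along the orbit. By induction from the seed $(5,88)$ and Proposition~\ref{prop:pellreduction}, every $(z_n,m_n)$ satisfies \eqref{eq:generalizedpell}.

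Proposition~\ref{prop:pellreduction} then gives \eqref{eq:centered} for $X_n=317z_n$ and $Y_n=200z_n$, at least when $z_n>0$. Proposition~\ref{prop:centered} turns this into \eqref{eq:family}, once we know that $A_n$ and $B_n$, as defined by \eqref{eq:recoverAB}, are positive integers.

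Next come integrality and positivity. Since $u,v,a,b>0$ and the seed is positive, induction gives $z_n,m_n$ as positive integers. So $m_n$ and $B_n$ are integers. For $A_n$ we need $317z_n$ odd, that is, $z_n$ odd. Reduce the recurrence mod $2$: $b=156$ is even, so $z_{n+1}\equiv uz_n \pmod 2$. From $u^2=1+Dv^2$ with $D$ even, $u$ is odd. Hence $z_n\equiv z_0=5$ is odd for all $n$, and $A_n$ is an integer. (One can check that the listed $u$ ends in $9$, so it is indeed odd.)

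The ordering and positivity are where the real content lies, and I expect this step to be the main obstacle. Substituting \eqref{eq:AB}, the condition $B_n>A_n+2m_n-1$ becomes
\[
200z_n-m_n+1>\frac{317z_n+2m_n-1}{2},
\quad\text{i.e.}\quad 83z_n+3>4m_n .
\]
Positivity of $A_n$ is the weaker condition $317z_n+1>2m_n$, and positivity of $B_n$ follows from $A_n>0$ together with the ordering. So everything reduces to bounding $m_n/z_n$ above. From \eqref{eq:generalizedpell},
\[
m_n^2=\frac{48329z_n^2-161}{156}<\frac{48329}{156}z_n^2,
\]
so $m_n/z_n<\sqrt{48329/156}\approx 17.60$. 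This is comfortably below $83/4=20.75$. Explicitly, $16\cdot 48329=773264<83^2\cdot156=1074684$, so $16m_n^2<(83z_n)^2$ and $4m_n<83z_n$. This bound uses only the Pell relation, not the size of $u,v$, so the feared obstacle disappears once the inequality is squared. The only care needed is to remember that squaring is legitimate because $z_n,m_n>0$.

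Finally, distinctness. Since $u\ge1$ and $bvm_n>0$, we get $z_{n+1}>z_n$, so the $z_n$ strictly increase. Therefore the triples $(m_n,A_n,B_n)$ are pairwise distinct: the pair $(m_n,A_n)$ determines $X_n$ and hence $z_n$. This yields infinitely many ordered cubic runs.
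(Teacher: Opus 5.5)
Your proof is correct and follows essentially the same route as the paper: invariance of $az^2-bm^2$ under the recurrence, a parity argument to make $A_n$ an integer, the bound $m_n/z_n<\sqrt{a/b}<83/4$ for positivity and ordering, then Propositions~\ref{prop:pellreduction} and~\ref{prop:centered}, with monotonicity of $z_n$ for distinctness. Your parity step is slightly leaner than the paper's, since it uses only that $b$ is even and that $u$ is odd (which $u^2=1+Dv^2$ forces), instead of also tracking $v$ and $m_n$ even; you also verify $\sqrt{48329/156}<83/4$ explicitly, where the paper only asserts it.
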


\begin{proof}
The matrix
\[
 M=
 \begin{pmatrix}
 u&bv\\
 av&u
 \end{pmatrix}
\]
has determinant \(u^2-abv^2=1\).  A direct calculation shows that
\[
 a z_{n+1}^2-bm_{n+1}^2
 =
 a z_n^2-bm_n^2.
\]
Hence every pair generated from \((5,88)\) satisfies
\[
 az_n^2-bm_n^2=161.
\]
All entries in the recurrence are positive, so \(z_n,m_n>0\).

The integer \(u\) is odd and \(v\) is even.  Since \(z_0\) is odd and
\(m_0\) is even, recurrence \eqref{eq:recurrence} preserves
\(z_n\) odd and \(m_n\) even.  Therefore \(A_n\) in
\eqref{eq:AB} is an integer, and \(B_n\) is obviously an integer.

Since
\[
 az_n^2-bm_n^2=161>0,
\]
we have
\[
 \frac{m_n}{z_n}<\sqrt{\frac{a}{b}}
 =\sqrt{\frac{48329}{156}}<\frac{83}{4}.
\]
It follows that
\[
 A_n
 =\frac{317z_n-2m_n+1}{2}>0,
 \qquad
 B_n=200z_n-m_n+1>0.
\]
Furthermore,
\begin{align*}
 B_n-(A_n+2m_n-1)
 &=\frac{83z_n-4m_n+3}{2}\\
 &>0.
\end{align*}
Thus the right-hand progression begins strictly after the left-hand
consecutive block ends.

Finally, put
\[
 X_n=317z_n,\qquad Y_n=200z_n.
\]
Proposition \ref{prop:pellreduction} gives
\eqref{eq:centered}; Proposition \ref{prop:centered} then gives
\eqref{eq:family}.  The recurrence is strictly increasing, hence
produces infinitely many distinct triples.
\end{proof}

\begin{corollary}[A scalar recurrence for the lengths]\label{cor:scalarrecurrence}
The sequence of half-lengths \(m_n\) in Theorem \ref{thm:main} satisfies
\[
 m_0=88,\qquad
 m_1=360360696170473731112,
\]
and, for every \(n\ge0\),
\begin{equation}\label{eq:scalarrecurrence}
 m_{n+2}=4094871483582054398\,m_{n+1}-m_n.
\end{equation}
\begin{proof}
The recurrence \eqref{eq:recurrence} is multiplication by the matrix
\[
 M=\begin{pmatrix}u&bv\\ av&u\end{pmatrix}.
\]
Since \(\det M=1\) and \(\operatorname{tr}M=2u\), the
Cayley--Hamilton identity gives
\[
 M^2-2uM+I=0.
\]
Applying this identity to \((z_n,m_n)^T\) shows that each coordinate
satisfies \(s_{n+2}=2u s_{n+1}-s_n\).  Using
\(u=2047435741791027199\) gives \eqref{eq:scalarrecurrence}.  Finally,
substitution of \((z_0,m_0)=(5,88)\) in \eqref{eq:recurrence} gives the
displayed value of \(m_1\).
\end{proof}
\end{corollary}

\begin{remark}
For example, the first four terms of the half-length sequence can be
generated in MATLAB as follows:
\begin{verbatim}
a = zeros(1,4,'sym');
a(1) = sym(88);
a(2) = sym('360360696170473731112');
for n = 1:2
    a(n+2) = sym('4094871483582054398')*a(n+1) - a(n);
end
a
\end{verbatim}
\end{remark}

\begin{remark}
The initial member \(n=0\) of Theorem \ref{thm:main} is the seed

\[
(m_0,A_0,B_0)=(88,705,913),
\]

which recovers \eqref{eq:seedidentity}.  The value of \(m_1\) in the
corollary shows that the very next member of the same Pell orbit is
already extraordinarily large; its remaining parameters are

\[
A_1=2884716843021136441536,\qquad
B_1=3734374369055532795889.
\]

The seed was found by a direct computer search, but the next member of its
Pell orbit is already far beyond the range of such a search.  The Pell
equation is useful because it turns one isolated example into an explicit
infinite family.

However, this is not the next ordered cubic run overall.  The generalized Pell equation \eqref{eq:generalizedpell} has
solutions belonging to other orbits.  In fact, there is an ordered solution
with the much smaller value
\[
 m=12005615252944088.
\]
More precisely,
\[
 (z,m)=(682091780239355,12005615252944088)
\]
gives
\[
 A=96105931914993680,\qquad
 B=124412740794926913,
\]
and
\[
 B-(A+2m-1)=4295578374045058>0.
\]
So this is also an ordered cubic run, and its value of $m$ is much smaller
than $360360696170473731112$.  So \(m_1\) is merely the next term in the orbit that begins at \(88\); 
smaller ordered solutions exist in other orbits of \eqref{eq:generalizedpell}.
\end{remark}

\section{The smaller identity}

The smaller identity \eqref{eq:small} also has a Pell reduction.  Since \((X,Y)=(57,36)\), impose
\[
 X=19z,\qquad Y=12z.
\]
Then \eqref{eq:centered} reduces to
\begin{equation}\label{eq:smallpell}
 53z^2-28m^2=29,
\end{equation}
with the seed \((z,m)=(3,4)\).  The associated Pell equation is
\[
 u^2-1484v^2=1,
\]
whose least positive solution is \((u,v)=(1695,44)\).  Multiplication
by this Pell unit gives the recurrence
\[
 z'=1695z+1232m,\qquad
 m'=2332z+1695m,
\]
which preserves \eqref{eq:smallpell}.  Starting from \((3,4)\), the
next solution is
\[
 (z,m)=(10013,13776).
\]
For \(X=19z\), \(Y=12z\), the corresponding
\[
 A=\frac{19z-2m+1}{2},\qquad B=12z-m+1
\]
satisfy
\[
 B-(A+2m-1)=-2518<0.
\]
So this Pell orbit gives more solutions of the cubic power-sum equation,
but the ordering condition fails already at the next step.
For example, the next iterate gives
\[
(m,A,B)=(13776,81348,106381),
\]
and hence the cubic identity
\[
\begin{aligned}
81348^3+81349^3+\cdots+108899^3
&=106381^3+106383^3+\cdots+133931^3\\
&=24,212,021,903,066,039,616.
\end{aligned}
\]
This is a genuine equality of cubic power sums, but it is not an ordered
cubic run: the right side starts at \(106381\), while the left side
continues up to \(108899\).  So the ratio \(317:200\) works better for
the ordered-run problem.

The cubic version behaves quite differently from the classical Pythagorean runs. We do not get a solution for every prescribed
length.  Instead, after centering the progressions and fixing a suitable
ratio of their centers, the problem leads to a generalized Pell equation.  The seed \( (m,A,B)=(88,705,913) \) lies on a Pell orbit
that yields infinitely many ordered identities in which \(2m\) consecutive
cubes equal \(m\) later cubes with common difference \(2\).  The
corresponding half-lengths themselves satisfy a second-order linear
recurrence.  An elementary question about consecutive cubes ends up producing a Pell equation and a linear recurrence.

The square identity \eqref{eq:square-run} and the cubic family
\eqref{eq:family} have rather different arithmetic characters.
For squares, the relevant difference factors linearly in \(A\), giving
the elementary formula \(A=K(2K+1)\) for every \(K\).  For the cubic
step-\(2\) variant, fixing a suitable ratio of progression centers
leads instead to a Pell-type equation, equivalently to a generalized Pell equation after normalization.

This problem is also part of the more general study of Diophantine
equations involving power sums of arithmetic progressions.  The closest
general reference among those cited here is \cite{BazsoEtAl2012}, which
studies equal values of two such power sums when the progression
parameters and exponents are fixed.  By contrast, in \eqref{eq:main}
the starting points and the common length parameter vary together, and
the result here is an explicit infinite family satisfying the additional
ordering condition.  The occurrence of a generalized Pell equation is
not itself novel; related Pell reductions for sums of consecutive cubes
occur, for example, in \cite{Pletser}.

Several questions remain even if we keep the same basic setup.  For example:
\begin{enumerate}
 \item Classify the positive integer solutions of \eqref{eq:main}.
 \item Determine which rational center ratios \(X:Y\) reduce
       \eqref{eq:centered} to generalized Pell equations possessing
       infinitely many positive solutions.
 \item Determine which such Pell families preserve the ordering
       \(B>A+2m-1\).
 \item Replace the right-hand common difference \(2\) by a general
       integer \(d\ge2\), or replace cubes by higher powers.
\end{enumerate}

\section*{Acknowledgments}

The computational verification and numerical experiments in this paper were performed using MATLAB. The author used OpenAI ChatGPT (GPT-5.6 Sol) during the preparation of this paper to assist with numerical searches, checking calculations, language corrections, and formatting. The author independently reviewed and verified the mathematical arguments, computations, references, originality, and accuracy of the paper and takes full responsibility for its entire content.

\end{document}